\documentclass[12pt]{amsart}
\usepackage{mathtools,amssymb,amsfonts}
\usepackage{amsthm}
\usepackage{hyperref}

\hypersetup{
  colorlinks=true,
  linkcolor=blue,
  citecolor=blue,
  urlcolor=blue,
  pdftitle={Critical-point-free energy for fractional-Toledo representations},
  pdfauthor={Rivu Bardhan, Anu Dhochak, and Pradip Kumar}
}

\newtheorem{theorem}{Theorem}[section]
\newtheorem{proposition}[theorem]{Proposition}
\newtheorem{lemma}[theorem]{Lemma}
\newtheorem{corollary}[theorem]{Corollary}
\theoremstyle{remark}
\newtheorem{remark}[theorem]{Remark}

\newcommand{\CH}{\mathbb{CH}}
\newcommand{\Teich}{\mathcal T}
\newcommand{\cX}{\mathcal X}
\newcommand{\PU}{\mathrm{PU}}
\newcommand{\SU}{\mathrm{SU}}
\newcommand{\tr}{\operatorname{tr}}
\newcommand{\im}{\operatorname{im}}
\newcommand{\Hess}{\operatorname{Hess}}
\newcommand{\II}{\mathrm{II}}
\newcommand{\dd}{\,\mathrm d}

\title[Critical-point-free energy]{Critical-point-free energy for\\
fractional-Toledo representations}

\author[R. Bardhan]{Rivu Bardhan}
\address{Department of Mathematics, Shiv Nadar University, Dadri 201314,
Uttar Pradesh, India}
\email{rb212@snu.edu.in}

\author[A. Dhochak]{Anu Dhochak}
\address{International Centre for Theoretical Sciences, Tata Institute of
Fundamental Research, Bengaluru 560089, India}
\email{anu.dhochak@icts.res.in}

\author[P. Kumar]{Pradip Kumar}
\address{Department of Mathematics, Shiv Nadar University, Dadri 201314,
Uttar Pradesh, India}
\email{pradip.kumar@snu.edu.in}
\date{}

\subjclass[2020]{Primary 53C43, 53C42, 20H10; Secondary 32G15, 58E20}

\begin{document}

\begin{abstract}
Let $S_g$ be a closed oriented surface of genus $g\ge2$.  For a reductive
representation $\rho:\pi_1(S_g)\to\PU(2,1)$, let $E_\rho$ be the energy
function on Teichm\"uller space associated to equivariant harmonic maps into
$\CH^2$.  For every positive integer $d$ with $3\nmid d$, all sufficiently
large $h$, and every $g>h$, we construct an irreducible reductive
representation
\[
\rho_{g,h,d}:\pi_1(S_g)\to\PU(2,1)
\]
with
\[
\tau(\rho_{g,h,d})=2h-2-\frac{2d}{3}\notin\mathbb Z,
\qquad
\operatorname{Crit}(E_{\rho_{g,h,d}})=\varnothing.
\]
Consequently, the associated branched-minimal-surface forgetful map is not
surjective in these nonintegral Toledo components.
\end{abstract}

\maketitle

\section{Introduction}

Let $S_g$ be a closed oriented surface of genus $g\ge2$, and equip
$\CH^2$ with the symmetric metric with sectional curvatures in
$[-4,-1]$. 
If $\rho:\pi_1(S_g)\to\PU(2,1)$ is reductive and
$J\in\Teich(S_g)$, Corlette's theorem
\cite[Corollary~3.5]{Corlette} gives a $\rho$-equivariant harmonic map
\[
 f_{\rho,J}:(\widetilde S_g,J)\longrightarrow\CH^2.
\]
When $\rho$ is irreducible, this map is nonconstant and cannot have image in a
geodesic; it is therefore unique by the equivariant uniqueness theorem
\cite[Theorem~3.2]{DaskalopoulosWentworth}.  Its energy defines a
differentiable function $E_\rho$ on Teichm\"uller space.  The first-variation
formula \cite[Theorem~3.11]{DaskalopoulosWentworth} says that $J$ is critical
precisely when the Hopf differential of $f_{\rho,J}$ vanishes, equivalently
when $f_{\rho,J}$ is a possibly branched minimal map.

We call a representation in $\PU(2,1)$ \emph{irreducible} if its image
preserves no proper complex projective subspace of $\mathbb{CP}^2$.
Equivalently, its image fixes no point of $\mathbb{CP}^2$.  According to the
sign of a representative line for such a point, a reducible subgroup fixes a
point of $\CH^2$, fixes a point of $\partial\CH^2$, or preserves a complex
geodesic (by passing to the orthogonal complement in the last case).

We construct irreducible representations with nonintegral Toledo invariant
whose energy functions have no critical points.  We prove the followoing: 

\begin{theorem}\label{thm:main}
Fix a positive integer $d$ with $3\nmid d$.  There exists $h_0(d)$ such that,
for every $h\ge h_0(d)$ and every $g>h$, there is an irreducible reductive
representation
\[
 \rho_{g,h,d}:\pi_1(S_g)\longrightarrow\PU(2,1)
\]
satisfying
\begin{equation}\label{eq:toledo-main}
 \tau(\rho_{g,h,d})=2h-2-\frac{2d}{3}\notin\mathbb Z,
 \qquad
 \operatorname{Crit}(E_{\rho_{g,h,d}})=\varnothing.
\end{equation}
Branch points are allowed in this assertion.
\end{theorem}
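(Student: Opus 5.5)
We will construct $\rho_{g,h,d}$ by pulling back along a degree-one collapse map, and then use a Toledo/degree comparison to forbid branched minimal surfaces. Fix a closed surface $S_h$ and a representation $\sigma:\pi_1(S_h)\to\PU(2,1)$ with $\tau(\sigma)=2h-2-\tfrac{2d}{3}$. We take $\sigma$ irreducible and reductive, for instance a polystable $\PU(2,1)$-Higgs bundle on $S_h$ whose Toledo invariant has fractional part $\tfrac{1}{3}$ or $\tfrac{2}{3}$ according to $d \bmod 3$. Such objects exist in every allowed nonintegral Toledo component once $h\ge h_0(d)$; this is where $h_0(d)$ comes from, via the existence results for $\PU(2,1)$ Higgs bundles of Xia and of Garc\'ia-Prada--Gothen--Mu\~noz. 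For $g>h$, let $c:S_g\to S_h$ be the pinch map collapsing $g-h$ handles. It has degree one, so $\rho_{g,h,d}=\sigma\circ c_*$. This representation is still irreducible and reductive, since its image equals the image of $\sigma$. Its Toledo invariant is $\deg(c)\,\tau(\sigma)=2h-2-\tfrac{2d}{3}$, which is nonintegral because $3\nmid d$.

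The main step is to show $\operatorname{Crit}(E_\rho)=\varnothing$. Suppose $J$ were critical. Then $f=f_{\rho,J}$ is a $\rho$-equivariant branched minimal map into $\CH^2$. Since $\rho$ is irreducible, $f$ is nonconstant and not contained in a geodesic. We first classify $f$ as one of three types:
\begin{itemize}
\item holomorphic,
\item antiholomorphic,
\item a branched minimal surface that is neither, in which case the generic Kähler angle lies strictly in $(0,\pi)$.
\end{itemize}
By the Toledo formula, $\tau(\rho)=\tfrac{1}{2\pi}\int_{S_g} f^*\omega$.

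If $f$ is holomorphic (or antiholomorphic), the Higgs field is nilpotent of a special form and the Toledo invariant becomes an integer combination computed from degrees of line subbundles. More precisely, $f$ induces a holomorphic section whose zeros give $\tau=\tfrac{2}{3}(\text{integer})$ with a Milnor--Wood type bound. Concretely, the holomorphic case forces $\tau = 2g-2-\tfrac{2}{3}\#\{\text{branch points}\}$. We then need to show this is incompatible with $2h-2-\tfrac{2d}{3}$: it would require $3g-3-3h+3 = -d+\#\mathrm{branch}$, i.e.\ $\#\mathrm{branch}=3(g-h)+d$. Further constraints must rule this out.

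That is why I expect the non-holomorphic case to be the real obstacle. There I would use the Wolfson/Eells--Salamon adjunction-type formulas for minimal surfaces in complex surfaces, in the form
\[
\tfrac{1}{2\pi}\int f^*\omega = \pm(\chi + \text{complex/anticomplex point counts}),
\]
together with Gauss--Bonnet. These combine with the genus gap to give an inequality of the form $|\tau(\rho)|\ge 2g-2 - C$ or an integrality constraint. The key point is that the area of $f$ is at least the energy of $f$ restricted to a pulled-back piece. Since $\rho$ factors through $c_*$, the energy on the collapsed handles can be decreased by a conformal degeneration (pinching those handles). This shows $\inf E_\rho$ is not attained in $\Teich(S_g)$ and relates $\inf E_\rho$ to $\inf E_\sigma$.

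The cleanest route I would try first is different. A critical point yields a branched minimal map whose induced Higgs bundle on $(S_g,J)$ has a vanishing quadratic differential. Stability then gives a Toledo inequality involving $g$, namely $\tau\ge \tfrac{2}{3}(\ldots)$ tied to $2g-2$. Meanwhile $\tau$ is pinned to $2h-2-\tfrac{2d}{3}$ with $h<g$. Checking that the $\tfrac13$-fractional Toledo values force the holomorphic-type branch divisor degree to exceed what $g-h\ge1$ allows is the delicate computation.
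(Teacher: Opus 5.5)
There is a genuine gap: the proposal never actually excludes a critical point. Everything you use in the main step depends only on $g$, on $\tau$, and on degrees of divisors. That includes the holomorphic/antiholomorphic/neither trichotomy, the Wolfson/Eells--Salamon adjunction formulas, Gauss--Bonnet, and the stability inequalities for the nilpotent Higgs bundle. None of this sees that $\rho$ factors through $c_*$. If it did produce a contradiction, it would rule out equivariant branched minimal surfaces for \emph{every} irreducible representation of $\pi_1(S_g)$ with that Toledo invariant.

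Your own holomorphic-case computation shows the numerics do not close. It ends at the condition that the count equals $3(g-h)+d$, which is perfectly satisfiable, and the promised ``further constraints'' and ``delicate computation'' are never supplied. The formula $\tau=2g-2-\tfrac23\#\{\text{branch points}\}$ is also wrong. Bronstein's unbranched holomorphic curves have $\tau=2h-2-\tfrac{2d}{3}$ with $d\ge1$, so the correction comes from the second-fundamental-form data ($\deg N$), not from branching.

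The factorization through $c_*$ enters only in the handle-pinching heuristic. That heuristic at best shows $\inf E_\rho$ is not attained, which does not exclude saddle points or non-global local minima. Finally, by taking $\sigma$ to be an arbitrary polystable Higgs bundle you discard the structure the argument needs. In the paper, $h_0(d)$ comes from Bronstein's almost-Fuchsian construction, not from existence results for Higgs bundles.

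The missing idea is a geometric barrier built from the image of $\rho$. The paper takes $\sigma=\rho_0$, Bronstein's convex-cocompact representation. Its image preserves a complete embedded holomorphic curve $Y=F(\widetilde S_h)$ with $\|\II_Y\|_{\mathrm{op}}\le\frac14$, and $\exp^\perp:NY\to\CH^2$ is a diffeomorphism. Then $\Psi=d(\cdot,Y)^2$ is smooth and satisfies $\tr_L\Hess\Psi\ge c_R\Psi$ for every two-plane $L$ within distance $R$ of $Y$. This follows from the principal-curvature comparison for tubes about $Y$ plus min--max, which also covers planes mixing radial and tube directions.

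If $J$ were critical, $f=f_{\rho,J}$ would be weakly conformal and harmonic. Then $u=\Psi\circ f$ descends to the closed surface $S_g$ and satisfies $\Delta u\ge c\,e(f)\,u\ge0$ pointwise, branch points included. Hence $u$ is constant, and $u\equiv0$ because $f$ is nonconstant, so $f(\widetilde S_g)\subset Y$. Then $F^{-1}\circ f$ descends to a nonconstant weakly conformal harmonic map $S_g\to S_h$ inducing $c_*$. Such a map is holomorphic of degree one, hence a biholomorphism, contradicting $g>h$. The final contradiction is therefore a genus obstruction for the factored map, not a Toledo identity. Nonintegrality of $\tau$ enters only through the choice of $\rho_0$ and degree naturality.
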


The construction starts with Bronstein's almost-Fuchsian representation
$\rho_0:\pi_1(S_h)\to\PU(2,1)$ and precomposes it with the epimorphism on
fundamental groups induced by an orientation-preserving degree-one pinch
$P:S_g\to S_h$.  Thus $\rho_{g,h,d}=\rho_0\circ P_*$.  The image, and hence
irreducibility, is unchanged, while the new representation has a large kernel.

This construction lies just outside the properness theorem of
Goldman--Wentworth.  They prove that $E_\sigma$ is proper when $\sigma$ is a
convex-cocompact representation \cite[Theorem~A]{GoldmanWentworth}; in their
terminology, this requires $\sigma$ to be an isomorphism of the surface group
onto a convex-cocompact discrete subgroup.  Consequently $E_{\rho_0}$ is
proper on $\Teich(S_h)$ and attains a minimum.  Although
$\rho=\rho_0\circ P_*$ has exactly the same image subgroup as $\rho_0$, it has
nontrivial kernel and is not convex cocompact as a representation of
$\pi_1(S_g)$.  Our conclusion shows that this loss of faithfulness can be
drastic: $E_\rho$ has no critical point at all.  Since a nonnegative proper
energy would attain its infimum, $E_\rho$ cannot be proper.

Nonintegrality alone is not responsible for the conclusion: fractional-Toledo
representations admitting equivariant minimal surfaces are known
\cite{BronsteinPU,LoftinMcIntosh}.  The obstruction here is the combination of
the invariant geometric barrier and the degree-one pinch.  The new point is
that the squared-distance barrier is formulated as a uniform two-plane trace
inequality that applies to weakly conformal harmonic maps even at branch
points.  Equivalently, Theorem~\ref{thm:main} produces points of the
$\PU(2,1)$ character variety outside the image of the forgetful map from
equivariant branched minimal surfaces, an image problem raised in
\cite[Section~6.3]{LoftinMcIntosh}.

\section{Energy, Hopf differentials, and Toledo invariant}

\subsection{The energy function}

Fix an irreducible reductive representation
$\rho:\pi_1(S_g)\to\PU(2,1)$.  For $J\in\Teich(S_g)$, let $f_{\rho,J}$ be
the Corlette harmonic map.  Since we take the irreducible representation, the displacement function of the action on
$\CH^2$ is proper, otherwise an escaping sequence on which the displacements
of a finite generating set remain bounded would determine a common fixed
point of $\im\rho$ on $\partial\CH^2$, contrary to irreducibility.  Thus the
equivariant results cited below apply even when $\rho$ has a kernel.  We use
the convention
\[
 E_\rho(J)=\frac12\int_{S_g}|\dd f_{\rho,J}|^2\,\dd A_J.
\]
The function is independent of the representative metric in the conformal
class.  The Hopf differential
\[
 q_{\rho,J}=\big\langle (f_{\rho,J})_z,(f_{\rho,J})_z\big\rangle\,\dd z^2
 \in H^0(S_g,K_J^2)
\]
is holomorphic.  The first variation is a nonzero universal multiple of the
real part of the Beltrami--quadratic-differential pairing with $q_{\rho,J}$.
More precisely, existence and uniqueness are supplied by the equivariant
harmonic-map theorems, and differentiability and the variation formula are
given in \cite[Theorems~3.2, 3.4, and~3.11]{DaskalopoulosWentworth}.
Since the Beltrami--quadratic-differential pairing is nondegenerate,
\begin{equation}\label{eq:critical-equivalence}
 J\in\operatorname{Crit}(E_\rho)
 \quad\Longleftrightarrow\quad q_{\rho,J}=0
 \quad\Longleftrightarrow\quad f_{\rho,J}
 \text{ is weakly conformal}.
\end{equation}
Since $\rho$ is irreducible, the harmonic map is nonconstant.  A nonconstant
weakly conformal harmonic map from a surface is a branched minimal immersion;
its branch set is discrete
\cite[Propositions~2.2 and~2.4]{GulliverOssermanRoyden}.

\subsection{Normalization and sign of the Toledo invariant}

All complex structures on $S_g$ are compatible with its fixed orientation.
Let $J_{\CH^2}$ and $g_{\CH^2}$ be the complex structure and metric on
$\CH^2$, and take the positive K\"ahler form
\[
 \omega(u,v)=g_{\CH^2}(J_{\CH^2}u,v).
\]
For any smooth $\rho$-equivariant map
$f:\widetilde S_g\to\CH^2$, the invariant form $f^*\omega$ descends to
$S_g$, and we set
\begin{equation}\label{eq:toledo-def}
 \tau(\rho)=\frac{2}{\pi}\int_{S_g}f^*\omega.
\end{equation}
The number in \eqref{eq:toledo-def} is independent of $f$,
and
\begin{equation}\label{eq:toledo-lattice}
 \tau(\rho)\in\frac23\mathbb Z,
 \qquad |\tau(\rho)|\le 2g-2.
\end{equation}
The values index the connected components of the $\PU(2,1)$ character
variety \cite{GKL,Toledo1989,Xia}.  Under this normalization, a representation
lifts to $\SU(2,1)$ precisely when its Toledo invariant belongs to
$2\mathbb Z$ \cite[Remark~2.2]{LoftinMcIntosh}.  Thus the phrase
\emph{nonintegral Toledo invariant} means
\[
 \tau(\rho)=\frac{2k}{3}\quad\text{with}\quad 3\nmid k.
\]

If $\kappa$ is an antiholomorphic isometry of $\CH^2$ and
$\rho^\kappa(\gamma)=\kappa\rho(\gamma)\kappa^{-1}$, then
\begin{equation}\label{eq:toledo-conjugation}
 \tau(\rho^\kappa)=-\tau(\rho),
\end{equation}
because $\kappa^*\omega=-\omega$.

Degree naturality follows directly from the characteristic-class definition.
It is also immediate by pulling back the form in \eqref{eq:toledo-def}.

\begin{lemma}\label{lem:naturality}
Let $P:S_g\to S_h$ be a continuous map of signed degree $m$ between the
oriented surfaces, and let
$\rho_0:\pi_1(S_h)\to\PU(2,1)$.  Then
\[
 \tau(\rho_0\circ P_*)=m\,\tau(\rho_0).
\]
\end{lemma}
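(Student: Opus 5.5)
The plan is to compute both sides using the differential-form definition \eqref{eq:toledo-def}, applied to a smooth equivariant map adapted to $P$. Since $\tau$ depends only on the homotopy class of $P$ (it factors through $P_*$ on $\pi_1$), I will first replace $P$ by a homotopic smooth map. The signed degree is unchanged under this replacement, and $P_*$ changes only by conjugation, coming from a change of basepoint path. Conjugation in $\PU(2,1)$ does not affect $\tau$: if $f$ is $\rho$-equivariant, then $A\circ f$ is $A\rho A^{-1}$-equivariant, and $A^*\omega=\omega$ for holomorphic isometries. So I may assume $P$ is smooth.

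Next I fix a smooth $\rho_0$-equivariant map $f_0:\widetilde S_h\to\CH^2$. Such a map exists by the standard section-of-flat-bundle argument, because $\CH^2$ is contractible. Lift $P$ to $\widetilde P:\widetilde S_g\to\widetilde S_h$ compatibly with basepoints, so that $\widetilde P(\gamma x)=P_*(\gamma)\widetilde P(x)$. Then $f=f_0\circ\widetilde P$ is smooth and $(\rho_0\circ P_*)$-equivariant. The invariant form $f^*\omega=\widetilde P^*f_0^*\omega$ descends to $P^*\alpha$, where $\alpha$ is the form on $S_h$ induced by $f_0^*\omega$. Therefore
\[
\tau(\rho_0\circ P_*)=\frac{2}{\pi}\int_{S_g}P^*\alpha=\frac{2}{\pi}\,m\int_{S_h}\alpha=m\,\tau(\rho_0).
\]
The middle equality is the standard fact $\int_{S_g}P^*\alpha=\deg(P)\int_{S_h}\alpha$ for top-degree forms on closed oriented manifolds. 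One proof uses the regular-value count. Alternatively, write it in cohomology as $\langle P^*[\alpha],[S_g]\rangle=\langle[\alpha],P_*[S_g]\rangle=m\langle[\alpha],[S_h]\rangle$.

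The only mildly delicate point is the bookkeeping for basepoints and lifts. One must check that the lift $\widetilde P$ intertwines the deck actions through exactly the homomorphism $P_*$ used to define $\rho_0\circ P_*$. Once $\widetilde P$ is chosen to send a chosen lift of the basepoint of $S_g$ to a chosen lift of the basepoint of $S_h$, this holds by construction. Any other choice of lift alters $P_*$ by an inner automorphism of $\pi_1(S_h)$, which alters the representation by conjugation by an element of $\im\rho_0$, and that leaves $\tau$ unchanged as noted above.
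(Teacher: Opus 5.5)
Your proposal is correct and matches the paper's proof. Like the paper, you smooth $P$, compose a $\rho_0$-equivariant map with the lift $\widetilde P$, observe that the pulled-back K\"ahler form descends to $P^*\alpha_0$, and apply $\int_{S_g}P^*\alpha_0=m\int_{S_h}\alpha_0$. Your extra bookkeeping on basepoints, lifts, and conjugation invariance of $\tau$ fills in details the paper leaves implicit.
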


\begin{proof}
Replace $P$ by a smooth representative.  If
$F:\widetilde S_h\to\CH^2$ is $\rho_0$-equivariant, then $F^*\omega$ is
$\pi_1(S_h)$-invariant and descends to a two-form $\alpha_0$ on $S_h$.
Moreover, $F\circ\widetilde P$ is $(\rho_0\circ P_*)$-equivariant, and its
pulled-back K\"ahler form descends to $P^*\alpha_0$.  Hence
\[
 \int_{S_g}P^*\alpha_0=m\int_{S_h}\alpha_0.
\]
Multiplying by $2/\pi$ proves the formula.
\end{proof}

\section{The almost-Fuchsian input and the representation}

\subsection{Bronstein's fractional-Toledo representations}

Equip $\CH^2$ with its standard K\"ahler metric $g_{\CH^2}$, normalized so
that its sectional curvatures lie in $[-4,-1]$.  If $J_{\CH^2}$ is its complex
structure, then
\[
 \omega(u,v)=g_{\CH^2}(J_{\CH^2}u,v)
\]
is the positive K\"ahler form used in \eqref{eq:toledo-def}.  All norms and
distances in this subsection are taken with respect to $g_{\CH^2}$ and the
metrics it induces.

For an immersed surface $F:\Sigma\to\CH^2$ and a unit normal vector $\nu$,
let $A_\nu$ be the shape operator defined by
\[
 \langle A_\nu u,v\rangle=\langle\II_F(u,v),\nu\rangle,
\]
and set
\begin{equation}\label{eq:II-operator-norm}
 \|\II_F\|_{\mathrm{op}}
 =\sup_{|\nu|=1}\|A_\nu\|_{\mathrm{op}}
 =\sup_{|u|=|v|=1}|\II_F(u,v)|.
\end{equation}
The full tensor norm dominates this operator norm.  We call $F$
\emph{almost-Fuchsian} if its induced metric is complete and
$\sup_\Sigma\|\II_F\|_{\mathrm{op}}<1$.

In \cite[Section~3]{BronsteinPU}, Bronstein uses the same normalization of the complex-hyperbolic metric as we do: its sectional curvatures lie in
$[-4,-1]$.  His Theorem~6.1 gives an equivariant holomorphic immersion whose
second fundamental form is arbitrarily small.  The pointwise norm used there
and the operator norm defined in \eqref{eq:II-operator-norm} are uniformly equivalent,
since the tangent and normal ranks are fixed; in particular, after decreasing
Bronstein's input parameter if necessary, his estimate implies any prescribed
bound on $\|\II_F\|_{\mathrm{op}}$.  The embedding and convex-cocompactness
conclusions then follow from
\cite[Theorem~3.2 and Corollary~3.3]{BronsteinPU}.  Since a holomorphic curve
in a K\"ahler manifold is minimal, these results give the following form of
Bronstein's construction.

\begin{theorem}[Bronstein, in the present normalization]\label{thm:bronstein}
Fix a positive integer $d$ and $0<\eta<\tfrac12$.  There is
$h_0=h_0(d,\eta)$ such that, for every $h\ge h_0$, there are a complex
structure $J_0$ on $S_h$, a convex-cocompact representation
\[
 \rho_0:\pi_1(S_h)\longrightarrow\PU(2,1),
\]
and a $\rho_0$-equivariant holomorphic embedding
\(
 F:(\widetilde S_h,J_0)\longrightarrow\CH^2
\)
with complete induced metric and
\(
 \sup_{\widetilde S_h}\|\II_F\|_{\mathrm{op}}\le\eta.
\)
The immersion $F$ is minimal and almost-Fuchsian, and
\begin{equation}\label{eq:bronstein-toledo}
 \tau(\rho_0)=2h-2-\frac{2d}{3}.
\end{equation}
\end{theorem}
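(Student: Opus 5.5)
This statement is essentially a repackaging of Bronstein's construction in the normalization fixed above, so the plan is to assemble it from the cited results rather than prove anything new.

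\emph{Step 1: fix $h_0$.} Given $d$ and $\eta\in(0,\tfrac12)$, I would first choose Bronstein's small input parameter. By \cite[Theorem~6.1]{BronsteinPU}, for every $h$ beyond a threshold depending on $d$ and this parameter, there is a complex structure $J_0$ on $S_h$ and an equivariant holomorphic immersion $F:(\widetilde S_h,J_0)\to\CH^2$. Its pointwise second fundamental form is bounded by a constant that tends to zero with the parameter. The threshold becomes $h_0(d,\eta)$.

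\emph{Step 2: convert the curvature bound.} Here I would check that Bronstein's metric normalization agrees with ours, with sectional curvatures in $[-4,-1]$; if it differed, rescaling would only change the constants. The pointwise norm he uses and $\|\II_F\|_{\mathrm{op}}$ are uniformly comparable, because the tangent rank is $2$ and the normal rank is $2$, with a comparison constant $C$ independent of $h$. Decreasing the parameter so that Bronstein's bound is at most $\eta/C$ then gives $\sup\|\II_F\|_{\mathrm{op}}\le\eta$. Completeness of the induced metric follows from equivariance: the metric descends to the compact surface $S_h$, so its lift is complete.

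\emph{Step 3: embedding, convex cocompactness, minimality.} Since $\eta<\tfrac12$ is below the threshold in \cite[Theorem~3.2 and Corollary~3.3]{BronsteinPU}, those results make $F$ an embedding and make $\rho_0$ faithful with convex-cocompact image. The threshold there is $1$ in our normalization, where the nearest-point-projection argument works whenever $\|\II\|_{\mathrm{op}}<1$ because the curvature is at most $-1$. Minimality holds because complex submanifolds of a K\"ahler manifold are minimal: the trace of $\II$ over $\{u,Ju\}$ vanishes since $\II(Ju,Jv)=-\II(u,v)$ for holomorphic curves. Together with Step 2, this shows $F$ is minimal and almost-Fuchsian.

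\emph{Step 4: the Toledo invariant.} The Toledo value comes from Bronstein's construction, where $\rho_0$ is built from a complex line bundle of degree tied to $d$. I would compute it with \eqref{eq:toledo-def} applied to $F$. Because $F$ is holomorphic, $F^*\omega$ equals the induced area form, and Gauss--Bonnet together with the Gauss equation relates its integral to $2h-2$ and a correction coming from normal-bundle data. That correction yields $-\tfrac{2d}{3}$, and I would match it with Bronstein's statement of the invariant.

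\emph{Main obstacle.} The hardest part is the bookkeeping of normalizations. I need to confirm that Bronstein's sign and scaling of $\tau$ agree with \eqref{eq:toledo-def}; if the sign were opposite, I would replace $\rho_0$ by $\rho_0^\kappa$ using \eqref{eq:toledo-conjugation}, which preserves all the geometric properties. I also need the comparison constant $C$ in Step 2 to be independent of $h$.
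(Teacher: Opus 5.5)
Steps 1--3 agree with the paper: Bronstein's Theorem~6.1, the uniform comparison of the pointwise and operator norms, his Theorem~3.2 and Corollary~3.3 for embeddedness and convex cocompactness, and holomorphic $\Rightarrow$ minimal. Your descent argument for completeness is fine.

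The gap is in Step~4, the only part of the statement that is more than a citation. The Gauss equation and Gauss--Bonnet by themselves only give
\[
 \tau(\rho_0)=2h-2-\frac{1}{4\pi}\int_{S_h}|\II_F|^2\dd A
\]
(full tensor norm), and that integral is not topological. You never say which ``normal-bundle data'' produces $d$, so ``that correction yields $-\tfrac{2d}{3}$'' is asserted, not derived. The missing input is Bronstein's degree bookkeeping:
\begin{itemize}
\item[(i)] $\deg N_h=3h-3+d$ and $L_h=K_{J_0}^3N_h^{-1}$, so $\deg L_h=3h-3-d$;
\item[(ii)] the positive rank-two summand $V_h$ is an extension of $K_{J_0}L_h^{-1}$ by $K_{J_0}^{-1}$, so $\deg V_h=-\deg L_h$;
\item[(iii)] Chern--Weil in the present normalization gives $\tau(\rho_0)=-\tfrac23\deg V_h=\tfrac23(3h-3-d)$.
\end{itemize}
Your route can be completed equivalently by also integrating the normal-curvature equation, with normal bundle $K_{J_0}L_h^{-1}$ of degree $d+1-h$. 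That degree, however, has to be extracted from Bronstein's data.

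Your sign fallback is also wrong. Replacing $(\rho_0,F)$ by $(\rho_0^\kappa,\kappa\circ F)$ makes the map antiholomorphic with respect to $J_0$. The conjugate structure is not compatible with the orientation of $S_h$, so the ``holomorphic embedding'' conclusion is lost.

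No fallback is needed anyway. Since $F$ is holomorphic, $F^*\omega$ is the positive induced area form, so $\tau(\rho_0)>0$ is forced; you essentially noted this yourself in Step~4. The discrepancy you anticipate is purely one of convention, and it does occur: Bronstein's formula (3.5) gives $\operatorname{Tol}_{\mathrm B}(\rho_0)=-\tfrac23\deg L_h=2-2h+\tfrac{2d}{3}$, and $\tau=-\operatorname{Tol}_{\mathrm B}$. Had you ``matched with Bronstein'' and then conjugated, you would have produced a representation whose Toledo invariant in the present convention is $2-2h+\tfrac{2d}{3}$, with an antiholomorphic equivariant map.
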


We quickly explain explicitly the Toledo sign convention.  This is useful
because the convention for the Toledo invariant in \cite{BronsteinPU} is
opposite to the one used here. The discussion in next three paragraphs is just an overview; the notations and terminologies are as in \cite{BronsteinPU}. 

In \cite[Theorem~5.2]{BronsteinPU}, for $h$ sufficiently large,  with given condition, we get a Riemann surface
$(S_h,J_0)$, a holomorphic line bundle $N_h\to (S_h,J_0)$ of degree
\[
    \deg N_h=3h-3+d,
\]
and a nonzero holomorphic section
\(\alpha_h\in H^0(S_h,N_h).
\) Let $K_{J_0}$ denote the canonical bundle of $(S_h,J_0)$.  Bronstein then
introduces the line bundle
\[
    L_h=K_{J_0}^{3}N_h^{-1}.
\]
Note that 
\(K_{J_0}^{3}L_h^{-1}
    =K_{J_0}^{3}\bigl(K_{J_0}^{3}N_h^{-1}\bigr)^{-1}
    \cong N_h.
\)
Thus the section $\alpha_h$ may equivalently be regarded as a section
\(\beta_h\in H^0\bigl(S_h,K_{J_0}^{3}L_h^{-1}\bigr),
\)
which is precisely the holomorphic datum appearing in
\cite[Theorem~3.7]{BronsteinPU}; geometrically, it determines the
$(2,0)$-part of the second fundamental form of the resulting holomorphic
immersion.

\smallskip 

These data determine a
$\rho_0$-equivariant holomorphic immersion
\[
    F:(\widetilde S_h,J_0)\longrightarrow\CH^2
\]
for a representation
\(
    \rho_0:\pi_1(S_h)\longrightarrow\PU(2,1).
\)  In Bronstein's convention,
\cite[equation~(3.5)]{BronsteinPU} gives
\[
    \operatorname{Tol}_{\mathrm B}(\rho_0)
    =-\frac23\deg L_h.
\]
Using the degree computed above,
\[
    \operatorname{Tol}_{\mathrm B}(\rho_0)
    =-\frac23(3h-3-d)
    =2-2h+\frac{2d}{3}.
\]

To relate this formula to our convention, recall the bundle underlying
Bronstein's projectively flat construction.  As a smooth bundle it has the
form
\[
    E_h=K_{J_0}^{-1}\oplus K_{J_0}L_h^{-1}\oplus\mathcal O,
\]
with the Hermitian form of signature $(2,1)$ positive on the first two
summands and negative on the last.  With the holomorphic structure determined
by the solution of the curvature equations, the positive rank-two summand
$V_h$ is represented by the extension
\begin{equation}\label{eq:bronstein-extension}
    0\longrightarrow K_{J_0}^{-1}
    \longrightarrow V_h
    \longrightarrow K_{J_0}L_h^{-1}
    \longrightarrow0.
\end{equation}
We have 
\( \deg V_h =\deg K_{J_0}^{-1}
      +\deg(K_{J_0}L_h^{-1}) =-\deg L_h.
\)

Our normalization of the Toledo invariant is
\(\tau(\rho)
    =\frac{2}{\pi}\int_{S_h}F^*\omega.
\)  This is the convention used in
\cite[Section~2.2, especially equation~(2.9) and the formula preceding
Remark~2.2]{LoftinMcIntosh}.  In this normalization the Chern--Weil formula
for the positive rank-two summand is
\[
    \tau(\rho_0)=-\frac23\deg V_h.
\]
Using $\deg V_h=-\deg L_h$, we therefore obtain
\(\tau(\rho_0) =2h-2-\frac{2d}{3}.
\)
Consequently,
\[
    \tau(\rho_0)
    =-\operatorname{Tol}_{\mathrm B}(\rho_0).
\]

The sign reversal also has a direct geometric explanation.  The map $F$ is
holomorphic and our K\"ahler form is positive.  Hence, away from zero points
since $F$ is an immersion,
\(
    F^*\omega
\)
is the positive area form of the induced metric on $(S_h,J_0)$.  Thus
\( \int_{S_h}F^*\omega>0,\)
so our Toledo invariant must be positive.  This agrees with
\(\tau(\rho_0)=2h-2-\frac{2d}{3}>0
\)
for the range of $h$ used in the construction.

\bigskip 

For the proof of Theorem~\ref{thm:main}, fix $\eta_0=\tfrac14$ and set
$h_0(d)=h_0(d,\eta_0)$, increasing it if necessary so that
$3h-3-d>0$ for $h\ge h_0(d)$.  Apply Theorem~\ref{thm:bronstein} with
$\eta=\eta_0$ and put
\[
 Y=F(\widetilde S_h)\subset\CH^2.
\]
The surface $Y$ is complete, embedded, minimal, and almost-Fuchsian.  Let
$NY$ be its normal bundle and define
$\exp^\perp(y,\xi)=\exp_y(\xi)$.  Bronstein's normal-exponential theorem
\cite[Theorem~3.1]{BronsteinHadamard} gives a diffeomorphism
\begin{equation}\label{eq:normalexp}
 \exp^\perp:NY\longrightarrow\CH^2.
\end{equation}
In these normal coordinates, the squared distance to $Y$ is
\begin{equation}\label{eq:psi}
 \Psi\bigl(\exp^\perp(y,\xi)\bigr)=|\xi|^2.
\end{equation}
Consequently, $\Psi(x)=d(x,Y)^2$ is smooth on all of $\CH^2$, including along
$Y$.

\subsection{The degree-one pinch}

Fix $g>h$.  There is an orientation-preserving degree-one pinch map
\[
 P:S_g\longrightarrow S_h
\]
that collapses the last $g-h$ handles.  On the standard generators it induces
the epimorphism
\begin{equation}\label{eq:pinch}
 P_*(a_i)=a_i,\quad P_*(b_i)=b_i\quad(i\le h),
 \qquad
 P_*(a_i)=P_*(b_i)=1\quad(i>h).
\end{equation}
Define
\begin{equation}\label{eq:rho-pullback}
 \rho=\rho_0\circ P_*:\pi_1(S_g)\longrightarrow\PU(2,1).
\end{equation}
This is the representation which is going to be the required one as in Theorem \ref{thm:main}.

\begin{proposition}\label{prop:rho-properties}
The representation $\rho$ in \eqref{eq:rho-pullback} is reductive, irreducible, and nonfaithful. Further it satisfies
\[
 \tau(\rho)=2h-2-\frac{2d}{3}.
\]
When $3\nmid d$, its Toledo invariant is nonintegral.
\end{proposition}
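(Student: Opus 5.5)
The plan is to read off each property from the corresponding property of $\rho_0$ in Theorem~\ref{thm:bronstein}, using that $P_*$ is surjective. By \eqref{eq:pinch}, $P_*$ hits every standard generator of $\pi_1(S_h)$, so it is an epimorphism and $\im\rho=\im\rho_0$. Irreducibility and reductivity depend only on the image subgroup, so they pass directly from $\rho_0$ to $\rho$.

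\emph{Irreducibility.} Suppose $\im\rho_0$ fixed a point of $\mathbb{CP}^2$. By the trichotomy in the introduction, it would then fix a point of $\CH^2$, fix a point of $\partial\CH^2$, or preserve a complex geodesic. We exclude each case in turn.
\begin{itemize}
\item A fixed point in $\CH^2$ would make the image relatively compact. This is impossible, because $\rho_0$ is faithful and discrete onto a nonelementary group: convex cocompactness gives a quasi-isometric orbit map.
\item A fixed boundary point would make the image an elementary group. But a cocompact action of a closed hyperbolic surface group on a convex hull is nonelementary.
\item An invariant complex geodesic $C$ would force the Toledo invariant to be integral, indeed $\pm(2h-2)$ or $0$ up to the lattice: the representation would factor through $\mathrm{P}(\mathrm{U}(1,1)\times\mathrm{U}(1))$, whose Toledo invariants lie in $2\mathbb Z$ by the Chern--Weil formula. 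A cleaner route is to note that the limit set of $\im\rho_0$ is $\partial Y$. This is a topological circle, the boundary of the almost-Fuchsian surface $Y$ with $\|\II\|<1$, which is not a complex geodesic since \eqref{eq:bronstein-toledo} gives $\tau(\rho_0)\neq\pm(2h-2)$. Hence $\partial Y$ cannot lie in the chain $\partial C$.
\end{itemize}
I expect this third exclusion to be the main point needing care. The first thing I would try is the Toledo argument: an invariant complex line makes the flat $\mathbb{C}^{2,1}$ bundle split as a line bundle plus a rank-two bundle, forcing $\tau\in 2\mathbb Z$ by the same Chern--Weil computation as in the overview. That contradicts $\tau(\rho_0)=2h-2-2d/3$ when $3\nmid d$. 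For general $d$ I would instead fall back on the limit-set argument above.

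\emph{Reductivity.} An irreducible subgroup of $\PU(2,1)$ is reductive, since its Zariski closure preserves no proper subspace and so has trivial unipotent radical. Alternatively, reductivity follows from $\rho_0$ being convex cocompact with image having an invariant convex set on which it acts cocompactly.

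\emph{Nonfaithfulness.} We have $\rho(a_g)=\rho_0(1)=1$ by \eqref{eq:pinch}, while $a_g\neq1$ in $\pi_1(S_g)$ because $g>h\ge0$ and the generator $a_g$ is nontrivial in the surface group (for instance, its image in $H_1$ is nonzero).

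\emph{Toledo invariant.} The map $P$ is orientation-preserving of degree one, so Lemma~\ref{lem:naturality} with $m=1$ and \eqref{eq:bronstein-toledo} give
\[
\tau(\rho)=\tau(\rho_0)=2h-2-\frac{2d}{3}.
\]
Writing this as $\frac23\bigl(3h-3-d\bigr)$, we have $3\mid(3h-3-d)$ if and only if $3\mid d$. So the value lies in $2\mathbb Z$ exactly when $3\mid d$, and for $3\nmid d$ it is nonintegral in the sense of the paper.
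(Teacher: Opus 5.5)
Your treatment of the fixed-point cases, reductivity, nonfaithfulness and the Toledo computation is fine. For $3\nmid d$, your parity argument against an invariant complex geodesic $C$ is a valid route that differs from the paper's. It is cleanest stated as follows. $\operatorname{Stab}(C)=\mathrm{P}(\mathrm{U}(1,1)\times\mathrm{U}(1))$ is isomorphic to $\mathrm{U}(1,1)$, embedded in $\mathrm{U}(2,1)$ by $A\mapsto\operatorname{diag}(A,1)$, so $\rho_0$ would lift to $\mathrm{U}(2,1)$. Dividing by a cube root of $\det$, which exists because $\det$ factors through the free abelian group $H_1(S_h)$, gives a lift to $\SU(2,1)$. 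Hence $\tau(\rho_0)\in2\mathbb Z$ by the lifting criterion stated after \eqref{eq:toledo-lattice}. This is also the only reason the flat $\mathbb C^{2,1}$ bundle you invoke exists: for $3\nmid d$, $\rho_0$ itself has none. This route needs no discreteness, whereas the paper uses convex cocompactness together with $0<\tau(\rho_0)<2h-2$. Drop your parenthetical ``$\pm(2h-2)$ or $0$'': every even value in $[-(2h-2),2h-2]$ occurs for such representations, and only membership in $2\mathbb Z$ matters.

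The gap is the case $3\mid d$. The proposition asserts irreducibility for every $d$; only its last sentence assumes $3\nmid d$. In that case $\tau(\rho_0)\in2\mathbb Z$, so parity gives nothing, and your limit-set fallback does not close the case. From $\Lambda(\im\rho_0)=\partial Y\subset\partial C$ you only get $\partial Y=\partial C$, since both are circles. The fact that $Y$ is not a complex geodesic does not contradict this: a non-totally-geodesic invariant surface, for example a bounded normal graph over $C$, can have a chain as ideal boundary. So ``hence $\partial Y$ cannot lie in $\partial C$'' is a non sequitur.

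You already use the right fact — a cocompact action on a complex geodesic forces $|\tau|=2h-2$ — but you apply it to $Y$ instead of $C$. The missing step is to transfer convex cocompactness to $C$:
\begin{itemize}
\item The convex hull of $\Lambda=\partial C$ is $C$, so $\im\rho_0$ acts cocompactly on $C$. The paper gets this instead from orbit distances in the totally geodesic $C$.
\item The kernel of $\operatorname{Stab}(C)\to\operatorname{Isom}(C)$ is compact, so it meets the discrete torsion-free group $\im\rho_0$ trivially.
\item The action on $C$ is therefore a cocompact Fuchsian action of $\pi_1(S_h)$, and an equivariant map into $C$ gives $|\tau(\rho_0)|=2h-2$.
\end{itemize}
This contradicts $0<\tau(\rho_0)<2h-2$. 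That is exactly the paper's argument, and it works uniformly in $d$.
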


\begin{proof}
Since $P_*$ is onto,
\[
 \im\rho=\im\rho_0.
\]
The representation $\rho_0$ is convex-cocompact, hence discrete, faithful,
nonelementary, and reductive.  We prove its irreducibility directly.  If its
image fixed a point of $\CH^2$, it would be a discrete subgroup of a compact
stabilizer and hence finite, contradicting faithfulness.  A fixed point of
$\partial\CH^2$ is likewise incompatible with nonelementary convex
cocompactness.

It remains to rule out preservation of a complex geodesic $C$.  The kernel of
the restriction homomorphism
\[
 \operatorname{Stab}_{\PU(2,1)}(C)
 \longrightarrow\operatorname{Isom}^+(C)
\]
is compact.  Its intersection with the discrete group $\im\rho_0$ is finite
and is trivial because $\im\rho_0\cong\pi_1(S_h)$ is torsion-free.  The
restriction homomorphism is proper because its kernel is compact, so the
induced action on $C$ is consequently discrete and faithful.  It is also
convex-cocompact: an orbit in the totally geodesic $C$ has the same ambient
distances as it has in $C$.  A convex-cocompact Fuchsian group isomorphic to
a closed surface group is cocompact (otherwise its compact convex core has
nonempty boundary and free fundamental group).  Its Toledo invariant must
therefore have absolute value $2h-2$ \cite{Toledo1989}.  This contradicts
\[
 0<\tau(\rho_0)=2h-2-\frac{2d}{3}<2h-2.
\]
The alternatives listed in the definition of irreducibility are exhausted,
so $\rho_0$ is irreducible.  Reductivity and irreducibility depend only on
the image and therefore also hold for $\rho$.

On the other hand, $P_*$ kills the fundamental groups carried by the collapsed
handles, so $\rho$ is not faithful.  Degree naturality
(Lemma~\ref{lem:naturality}) and $\deg P=1$ give the Toledo formula.  Finally,
\[
 \tau(\rho)=\frac23(3h-3-d),
\]
which is not an integer when $3\nmid d$.
\end{proof}

\begin{remark}
The representation $\rho$ is not convex-cocompact as a representation of
$\pi_1(S_g)$: its orbit map cannot be a quasi-isometric embedding because its
kernel is nontrivial.  Its \emph{image subgroup}, however, is the same
convex-cocompact subgroup as that of $\rho_0$.  This distinction is precisely
what makes the construction work.
\end{remark}

\section{A quantitative trapping lemma, including branches}

Bronstein \cite[Proposition~4.5]{BronsteinHadamard} already establishes a
strong squared-distance subharmonicity inequality on an immersed minimal
submanifold in its setting.  Here we give a self-contained pointwise
reformulation tailored to the present application.  It is uniform over every
two-plane, and the min--max
step below explicitly includes planes mixing the radial and tube directions.
This form can be pulled back by a weakly conformal harmonic map without first
assuming that its image is an immersed submanifold.  

\begin{lemma}\label{lem:tube}
Let $X$ be a Hadamard manifold with $K_X\le-1$, and let
$Y^2\subset X$ be a complete embedded minimal surface such that
\[
 \sup_{y\in Y}\|\II_Y\|_{\mathrm{op}}\le\eta<\tfrac12,
 \qquad
 \exp^\perp:NY\longrightarrow X\ \text{is a diffeomorphism}.
\]
Put $\Psi=d(\,\cdot\,,Y)^2$.  Then $\Psi\in C^\infty(X)$ and, for every
$R>0$, there is $c_R>0$ such that, whenever $d(x,Y)\le R$ and
$L\subset T_xX$ is a two-plane,
\begin{equation}\label{eq:tube-estimate}
 \tr_L\Hess\Psi\ge c_R\Psi(x).
\end{equation}
Here $\tr_L$ denotes the trace of the restriction to $L$.
\end{lemma}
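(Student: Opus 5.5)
The plan is to compute $\Hess\Psi$ in the normal coordinates \eqref{eq:normalexp} and reduce \eqref{eq:tube-estimate} to an eigenvalue bound. By the min--max (Ky Fan) principle, $\tr_L\Hess\Psi$ is at least the sum of the two smallest eigenvalues of $\Hess\Psi$ at $x$. So it suffices to show that this sum is at least $c_R\,r^2$, where $r=d(x,Y)$. Smoothness of $\Psi$ is immediate from \eqref{eq:psi}, because $\exp^\perp$ is a diffeomorphism. On $Y$ itself $\Psi=0$ and $\Hess\Psi=2\,\mathrm{pr}_{NY}\ge0$, so the inequality is trivial there; assume $r>0$ from now on.

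For $r>0$ write $\Psi=r^2$. Then
\[
 \Hess\Psi=2\,\dd r\otimes\dd r+2r\,\Hess r ,
\]
and $\Hess r$ vanishes on $\partial_r$ and equals the shape operator $S_r$ of the level set $\{d(\cdot,Y)=r\}$ on $\partial_r^\perp$. The eigenvalues of $\Hess\Psi$ are therefore $2$ in the radial direction and $2r\mu_i$ for the three eigenvalues $\mu_1\le\mu_2\le\mu_3$ of $S_r$. I will control $S_r$ along the normal geodesic $t\mapsto\exp_y(t\nu)$ through the Riccati equation
\[
 S'+S^2+R_{\partial_r}=0 .
\]
The initial data come from the tube geometry: $S$ restricted to the $TY$-directions has initial value $-A_\nu$, while in the remaining normal direction $S\sim 1/t$ as $t\to0$. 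Because $K_X\le-1$, Riccati comparison gives
\[
 \mu_1\ge\tanh(t-a),\qquad a=\operatorname{artanh}\eta ,
\]
and the eigenvalue coming from the normal sphere is at least $\coth t$.

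The key step is to bound the two smallest eigenvalues of $\Hess\Psi$ from below; I expect the case of planes mixing tube and sphere directions to be the main obstacle. There are two cases.

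\emph{Case 1: the radial eigenvalue $2$ is among the two smallest.} The sum is $2+2r\mu_1\ge 2+2r\tanh(r-a)$. For $r\ge a$ this is at least $2$. For $r<a$ it is at least $2-2r(a-r)\ge 2-a^2/2$, which is positive since $\eta<\tfrac12$ gives $a<0.55$. In either case the sum is bounded below by a positive constant, hence by $c\,r^2$ for $r\le R$.

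\emph{Case 2: both smallest eigenvalues come from $S_r$.} I need $\mu_1+\mu_2\ge c\,r$. The plan is to use the Riccati equation for the trace of $S$ restricted to a two-dimensional subspace, in the Ky Fan form. At $t=0$ the tube part has trace $-\tr A_\nu=0$, because $Y$ is minimal. Its derivative is at least $-\tr(S^2)+2\ge 2-2\eta^2>0$ initially, since the eigenvalues of $A_\nu$ are $\pm\lambda$ with $\lambda\le\eta$. The sphere eigenvalue $\coth t$ dominates everything for small $t$. Mixing between the tube and sphere directions is the delicate point: the Ky Fan sum of the two smallest eigenvalues is a concave function of $S$, and I would apply comparison to it directly via the inequality $\frac{d}{dt}(\mu_1+\mu_2)\ge 2-(\mu_1^2+\mu_2^2)$, valid in the viscosity sense. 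Combined with $\mu_i\ge\tanh(t-a)\ge-\eta$, this yields $\mu_1+\mu_2\ge c'\min(t,1)$ for all $t>0$, with $c'$ depending only on $\eta$.

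Multiplying by $2r$ gives Case 2 with $c_R=2c'\min(1,1/R)$. The constants depend only on $\eta$ and $R$, not on the point $y\in Y$, because every step uses only pointwise comparison. Taking the smaller of the constants from the two cases gives \eqref{eq:tube-estimate}.
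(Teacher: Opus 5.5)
Your proof is correct, and its skeleton matches the paper's. The min--max reduction brings everything down to $\min\{2r(\mu_1+\mu_2),\,2+2r\mu_1\}$. Your bound $\mu_1\ge\tanh(r-\operatorname{arctanh}\eta)$ is exactly the paper's $q_r(-\eta)$, and you handle the radial case for small $r$ the same way.

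The real difference is how you bound $\mu_1+\mu_2$. The paper imports this from Bronstein's eigenvalue-wise Riccati comparison (his Corollary~3.7), which gives $\mu_1+\mu_2\ge q_r(-a)+q_r(a)\ge 2(1-\eta^2)\tanh r$ with an explicit constant. You derive a bound directly, and the argument goes through:
\begin{itemize}
\item The Ky Fan sum $\phi=\mu_1+\mu_2$ is locally Lipschitz.
\item It satisfies $\phi'\ge 2-(\mu_1^2+\mu_2^2)$ almost everywhere, by testing with the parallel-transported bottom eigenplane.
\item Minimality gives $\phi(0^+)=-\tr A_\nu=0$.
\item The bound $\mu_i\ge-\eta$ gives $\mu_1^2+\mu_2^2\le\eta^2+(\phi+\eta)^2$, so $\phi$ is driven upward at a uniform rate while it is small. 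Hence $\phi\ge c'\min(r,1)$.
\end{itemize}
This is weaker and less explicit than the paper's bound for large $r$. It is still sufficient, because $c_R$ is allowed to depend on $R$. In exchange, it is self-contained: it handles the tube/sphere mixing in the level-set shape operator intrinsically, through concavity of the Ky Fan sum, rather than through a cited comparison theorem.

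Two minor points. You wrote the argument with three principal curvatures, i.e.\ for $\dim X=4$. For $\dim X=n$ the sphere block is $(n-3)$-dimensional with $S\sim t^{-1}I$, and nothing else changes. Also, your remark that a sphere eigenvalue is at least $\coth t$ is never used, and it is not well defined once eigenvectors mix, so you can drop it.
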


\begin{proof}
Under the inverse normal-exponential map, $\Psi(y,v)=|v|^2$, so $\Psi$ is
smooth.  Fix $x\notin Y$, write $r=d(x,Y)$ and $n=\dim X$, and let $\nu$ be
the unit normal at the foot point that points toward $x$.  Let
$\mu_1\le\cdots\le\mu_{n-1}$ be the principal curvatures, with normal
$\nabla r$, of the level hypersurface $N_rY$.  Since $Y$ is minimal, the two
eigenvalues of its shape operator $A_\nu$ are $-a,a$ for some
$0\le a\le\eta$.

Put $t=\tanh r$ and $q_r(s)=(t+s)/(1+st)$.  Bronstein's comparison
\cite[Corollary~3.7]{BronsteinHadamard} gives
\begin{align}
 \mu_1&\ge q_r(-a)\ge q_r(-\eta),\label{eq:mu-one}\\
 \mu_1+\mu_2
 &\ge q_r(-a)+q_r(a)
 =\frac{2t(1-a^2)}{1-a^2t^2}
 \ge 2(1-\eta^2)t.\label{eq:mu-two}
\end{align}
Off $Y$ one has
\begin{equation}\label{eq:hessian-distance}
 \Hess(r^2)=2\,\dd r\otimes\dd r+2r\,\Hess r,
\end{equation}
so its eigenvalues are $2,2r\mu_1,\ldots,2r\mu_{n-1}$.  By the min--max
principle, the trace on any two-plane is at least the sum of the two least
eigenvalues.  It is therefore bounded below by
\begin{equation}\label{eq:mixed-min}
 \min\left\{2r(\mu_1+\mu_2),\ 2+2r\mu_1\right\}.
\end{equation}
This is the step that also treats mixed radial--tube planes.

By \eqref{eq:mu-two}, the first term in \eqref{eq:mixed-min} is at least
\[
 4(1-\eta^2)r\tanh r
 \ge 4(1-\eta^2)\frac{\tanh R}{R}r^2.
\]
Here we used that $r\mapsto\tanh r/r$ is decreasing on $(0,\infty)$.
Moreover, $q_r(-\eta)\ge-\eta$ and it is nonnegative for
$r\ge\operatorname{arctanh}\eta$.  Hence \eqref{eq:mu-one} implies
\[
 2+2r\mu_1\ge2\delta_\eta,
 \qquad
 \delta_\eta=1-\eta\operatorname{arctanh}\eta>0;
\]
in the only nontrivial range $0<r\le\operatorname{arctanh}\eta$, use
$\mu_1\ge-\eta$.  Since $r\le R$, one may take
\begin{equation}\label{eq:cR}
 c_R=\min\left\{
 4(1-\eta^2)\frac{\tanh R}{R},
 \frac{2\delta_\eta}{R^2}
 \right\}>0.
\end{equation}
At a point $x\in Y$, $\Hess\Psi$ is twice the orthogonal projection onto
$N_xY$, while the right-hand side of \eqref{eq:tube-estimate} vanishes.  This
completes the proof.
\end{proof}

The next statement is the form needed for the energy problem.  Notice that the
domain quotient is compact, so no stochastic-completeness hypothesis is
required.  We use the sign convention
\[
 \Delta=\tr\Hess;
\]
in particular, $\Delta u\le0$ at a local maximum of a smooth function.

\begin{lemma}[Branched trapping]\label{lem:trapping}
Assume that $X$ and $Y$ satisfy the hypotheses of Lemma~\ref{lem:tube}.  Let $M$
be a closed connected Riemann surface, let
$\sigma:\pi_1(M)\to\operatorname{Isom}(X)$ have image preserving $Y$, and let
\[
 f:\widetilde M\longrightarrow X
\]
be a nonconstant $\sigma$-equivariant harmonic map which is weakly conformal.
Then $f(\widetilde M)\subset Y$.
\end{lemma}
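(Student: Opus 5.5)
The plan is to apply a maximum principle to the composed function $u=\Psi\circ f$ on $\widetilde M$. Since the image of $\sigma$ preserves $Y$ and acts by isometries, $\Psi=d(\,\cdot\,,Y)^2$ is $\im\sigma$-invariant. Hence $u$ is $\pi_1(M)$-invariant and descends to a smooth function on the closed surface $M$. By Lemma~\ref{lem:tube}, $\Psi\in C^\infty(X)$, so $u$ is smooth and attains its maximum on $M$. Put $R=\max_M\sqrt u=\max d(f,Y)$. If $R=0$ we are done, so assume $R>0$ and aim for a contradiction.

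The core computation is the composition formula for a harmonic map. In a local conformal coordinate $z=s+it$ with conformal factor $\lambda$, harmonicity of $f$ gives
\[
 \Delta_M u=\lambda^{-1}\bigl(\Hess\Psi(f_s,f_s)+\Hess\Psi(f_t,f_t)\bigr).
\]
Weak conformality means $|f_s|=|f_t|=:\sigma_0$ and $\langle f_s,f_t\rangle=0$.

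At points where $\sigma_0\neq0$, the vectors $f_s/\sigma_0$ and $f_t/\sigma_0$ form an orthonormal basis of a two-plane $L\subset T_{f}X$. Thus
\[
 \Delta_M u=\lambda^{-1}\sigma_0^2\,\tr_L\Hess\Psi\ge \lambda^{-1}\sigma_0^2\,c_R\,u
\]
by \eqref{eq:tube-estimate}, which applies since $d(f,Y)\le R$ everywhere. At branch points or other zeros of $df$, both sides vanish, so the inequality $\Delta_M u\ge c_R\,|\dd f|^2 u/2\ge0$ holds everywhere on $M$, with the metric normalization absorbed into $|\dd f|^2$. The point of the two-plane formulation is exactly that this pointwise identity needs no immersion hypothesis.

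Next, $u$ is subharmonic on the closed surface $M$. Either integrate $\Delta_M u$ over $M$ to get zero, or use the strong maximum principle. Integration gives $\int_M |\dd f|^2 u=0$. Since $f$ is nonconstant and harmonic, $\dd f$ vanishes only on a discrete set (unique continuation, or \cite{GulliverOssermanRoyden} for branched minimal maps), so $u\equiv0$ on a dense set and hence everywhere. This contradicts $R>0$ and shows $f(\widetilde M)\subset Y$.

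The main obstacle is making sure the constant $c_R$ is legitimately uniform. Choosing $R$ as the actual maximum of $d(f,Y)$, which is finite by compactness of $M$, handles this. A further point is the degenerate case $R=0$ in Lemma~\ref{lem:tube}. This is avoided because we only invoke the lemma with $R>0$.
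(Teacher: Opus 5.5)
Your proposal is correct and follows the paper's argument: $u=\Psi\circ f$ descends to $M$, and the composition formula plus weak conformality gives $\Delta_M u\ge c_R\,e(f)\,u$ where $\dd f\neq0$ and $\Delta_M u=0$ where $\dd f=0$, after which compactness of $M$ finishes the proof. The only minor difference is the last step: you integrate to get $|\dd f|^2u\equiv0$ and then need the zero set of $\dd f$ to be discrete, whereas the paper applies the strong maximum principle to make $u$ a constant $C$ and reads off $0=\Delta_M u\ge c\,e(f)\,C$, which forces $e(f)\equiv0$ when $C>0$ and so needs no unique-continuation input.
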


\begin{proof}
Because $Y$ is $\sigma(\pi_1M)$-invariant, the function
\[
 u=\Psi\circ f=d(f(\,\cdot\,),Y)^2
\]
is $\pi_1(M)$-invariant and descends to a smooth nonnegative function on the
compact surface $M$.  Set
\[
 R_0=\max\left\{1,\max_M\sqrt{u}\right\},
 \qquad e(f)=\frac12|\dd f|^2.
\]

At a point where $e(f)>0$, choose an orthonormal frame $e_1,e_2$ on $M$.
Weak conformality gives
\[
 \dd f(e_i)=\lambda E_i,
\]
where $E_1,E_2$ are orthonormal in a two-plane $L\subset TX$ and
$\lambda^2=e(f)$.  Since $f$ is harmonic, the composition formula and
Lemma~\ref{lem:tube} give
\begin{align}
 \Delta_Mu
 &=\sum_{i=1}^{2}\Hess\Psi(\dd f(e_i),\dd f(e_i))
   +\dd\Psi(\tau(f))\notag\\
 &=\lambda^2\tr_L\Hess\Psi
 \ge c_{R_0}e(f)u.\label{eq:subharmonic}
\end{align}
If $\dd f=0$, the same composition formula gives $\Delta_Mu=0$, so
inequality~\eqref{eq:subharmonic} holds pointwise there as well.  Thus
$\Delta_Mu\ge0$ on the closed connected surface $M$, and the maximum principle
makes $u$ constant.  If this constant were positive,
inequality~\eqref{eq:subharmonic} would force
$e(f)\equiv0$, contrary to the assumption that $f$ is nonconstant.  Hence
$u\equiv0$, which is the assertion.
\end{proof}

\begin{remark}
The use of $r^2$, rather than $r$, is convenient at points of $Y$: it is smooth
under the normal-exponential diffeomorphism.  At a branch point the composition
formula gives $\Delta_Mu=0$ directly, so no limiting argument or choice of a
two-plane is needed.
\end{remark}

\section{Proof that the energy has no critical point}

We now prove Theorem~\ref{thm:main}.  Retain the construction of
Proposition~\ref{prop:rho-properties}.

\begin{proof}[Proof of Theorem~\ref{thm:main}]
Suppose, for contradiction, that $J\in\Teich(S_g)$ is a critical point of
$E_\rho$.  By the equivalences in~\eqref{eq:critical-equivalence}, the
Corlette harmonic map
\[
f=f_{\rho,J}:(\widetilde S_g,J)\longrightarrow\CH^2
\]
is weakly conformal and is therefore a possibly branched minimal map.  It is
nonconstant: otherwise $\im\rho$ would fix its value, contrary to
irreducibility.

The group $\im\rho=\im\rho_0$ preserves the almost-Fuchsian plane
$Y=F(\widetilde S_h)$.  Applying Lemma~\ref{lem:trapping} yields
\begin{equation}\label{eq:image-in-Y}
 f(\widetilde S_g)\subset Y.
\end{equation}
Since $F:\widetilde S_h\to Y$ is an embedding, define
\[
 H=F^{-1}\circ f:\widetilde S_g\longrightarrow\widetilde S_h.
\]
Indeed, injectivity of $F$ and equivariance give
\[
 F(H(\gamma x))=f(\gamma x)=\rho_0(P_*(\gamma))f(x)
 =F(P_*(\gamma)H(x)).
\]
Therefore
\begin{equation}\label{eq:H-equivariance}
 H(\gamma x)=P_*(\gamma)H(x),
 \qquad \gamma\in\pi_1(S_g).
\end{equation}
Consequently $H$ descends to a smooth map
\begin{equation}\label{eq:Hbar}
 \overline H:(S_g,J)\longrightarrow(S_h,J_0)
\end{equation}
which induces $P_*$ on fundamental groups after compatible basepoint choices
(and induces it up to an inner automorphism without such choices).

Equip $\widetilde S_h$ with the induced metric $g_Y=F^*g_{\CH^2}$.  For each
$\gamma\in\pi_1(S_h)$, equivariance and invariance of the ambient metric give
\[
 \gamma^*g_Y=(F\circ\gamma)^*g_{\CH^2}
 =(\rho_0(\gamma)\circ F)^*g_{\CH^2}=g_Y.
\]
Thus $g_Y$ descends to a metric on $S_h$.  Because $F$ is holomorphic, the
conformal class of $g_Y$ is $J_0$.  Since $F$ is an isometric immersion for
$g_Y$ and $f=F\circ H$ is weakly conformal, $H$ is weakly conformal.

For a local orthonormal frame $e_1,e_2$ on the domain, the composition
formula for tension fields is
\begin{equation}\label{eq:tension-composition}
 \tau(F\circ H)=\dd F(\tau(H))+
 \sum_{i=1}^2\II_F\bigl(\dd H(e_i),\dd H(e_i)\bigr).
\end{equation}
Where $\dd H\ne0$, choose an orthonormal frame with
$\dd H(e_i)=\lambda E_i$ and $E_1,E_2$ orthonormal.  Then the last term is
$\lambda^2(\II_F(E_1,E_1)+\II_F(E_2,E_2))$ and vanishes because $F$ is
minimal.  Where $\dd H=0$, it vanishes directly.  Since $F\circ H=f$ is
harmonic and $\dd F$ is injective, equation~\eqref{eq:tension-composition}
implies
\[
 \tau(H)=0.
\]
Thus $\overline H$ is a nonconstant harmonic weakly conformal map between
Riemann surfaces.  Its branch set is discrete, its complement is connected,
and the orientation sign is constant there.  The standard local
characterization therefore shows that $\overline H$ is globally holomorphic
or globally antiholomorphic.

The closed surfaces $S_g$ and $S_h$ are aspherical.  Because
$\overline H_*=P_*$, the maps $\overline H$ and $P$ are homotopic and
\[
 \deg\overline H=\deg P=1.
\]
An antiholomorphic nonconstant map has negative signed degree, so
$\overline H$ is holomorphic.  A nonconstant holomorphic map between compact
connected Riemann surfaces is surjective, and the local degrees over any
fiber are positive integers whose sum is the global degree.  Since
$\deg\overline H=1$, every fiber consists of one point of local degree one.
Thus $\overline H$ is unramified and bijective, hence biholomorphic.
Consequently $g=h$, contradicting $g>h$.  Therefore $E_\rho$ has no critical
point.
\end{proof}

\section{About Forgetful map}\label{rem:fractional-warning}
Let
$\widehat{\mathcal M}^{\mathrm{br,irr}}_{g,\tau}$ be the set of triples
$(J,\sigma,f)$ with the following properties: $J$ is an
orientation-compatible complex structure on the marked $S_g$;
$\sigma:\pi_1(S_g)\to\PU(2,1)$ is irreducible and reductive with
$\tau(\sigma)=\tau$; and
\[
 f:(\widetilde S_g,J)\longrightarrow\CH^2
\]
is a nonconstant, $\sigma$-equivariant, harmonic, weakly conformal map.
Branch points are allowed.  We quotient by marked reparametrization and
simultaneous ambient conjugation as follows.  If
$\phi\in\operatorname{Diff}_0^+(S_g)$, $\widetilde\phi$ is a lift, and
$A\in\PU(2,1)$, then
\begin{equation}\label{eq:minimal-moduli-equivalence}
 (J,\sigma,f)\sim
 \bigl(\phi^*J,\ A(\sigma\circ\phi_*)A^{-1},\
 A\circ f\circ\widetilde\phi\bigr).
\end{equation}
Compatible basepoints are understood; changing them only changes the displayed
data by the same equivalence.  Put
\[
 \mathcal M^{\mathrm{br,irr}}_{g,\tau}
 =\widehat{\mathcal M}^{\mathrm{br,irr}}_{g,\tau}/\!\sim.
\]
Thus $J$ remains marked, reparametrizations isotopic to the identity have been
divided out, and the mapping class group has not been divided out.

Let $\cX_\tau^{\mathrm{irr}}(S_g,\PU(2,1))$ be the set of
$\PU(2,1)$-conjugacy classes $[\sigma]$ of irreducible reductive
representations $\sigma:\pi_1(S_g)\to\PU(2,1)$ with
$\tau(\sigma)=\tau$.  This is the marked irreducible character variety in
Toledo component $\tau$.  The forgetful map considered here is
\begin{equation}\label{eq:forgetful-map}
 \Phi_{g,\tau}:\mathcal M^{\mathrm{br,irr}}_{g,\tau}
 \longrightarrow\cX_\tau^{\mathrm{irr}}(S_g,\PU(2,1)),
 \qquad [(J,\sigma,f)]\longmapsto[\sigma].
\end{equation}
It is well defined because $\phi\in\operatorname{Diff}_0^+(S_g)$ induces an
inner automorphism of $\pi_1(S_g)$.  This is the marked version of the
forgetful map discussed in \cite[Section~6.3]{LoftinMcIntosh}.

\begin{corollary}\label{cor:nonsurjective}
For the triples $(g,h,d)$ in Theorem~\ref{thm:main}, the map
$\Phi_{g,2h-2-2d/3}$ is not surjective.  More precisely,
\[
 [\rho_{g,h,d}]\notin\operatorname{im}\Phi_{g,2h-2-2d/3}.
\]
\end{corollary}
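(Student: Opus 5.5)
Argue by contradiction. Suppose $[\rho_{g,h,d}]=\Phi_{g,\tau}([(J,\sigma,f)])$ with $\tau=2h-2-2d/3$. Then $\sigma$ is $\PU(2,1)$-conjugate to $\rho=\rho_{g,h,d}$: there is $A\in\PU(2,1)$ with $\sigma=A^{-1}\rho A$. The first step is to transport the triple to $\rho$ itself. The map $A\circ f$ is $\rho$-equivariant, harmonic, weakly conformal and nonconstant, since $A$ is an isometry. By the equivalence \eqref{eq:minimal-moduli-equivalence} with $\phi=\mathrm{id}$, we may therefore assume $\sigma=\rho$. If basepoint choices introduce an inner automorphism of $\pi_1(S_g)$, absorb it by a deck transformation, which only changes $f$ by precomposition with a deck map and preserves all the listed properties.

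Next, identify $f$ with the Corlette map. By Proposition~\ref{prop:rho-properties}, $\rho$ is irreducible and reductive. The uniqueness theorem \cite[Theorem~3.2]{DaskalopoulosWentworth} then says that any $\rho$-equivariant harmonic map $(\widetilde S_g,J)\to\CH^2$ equals $f_{\rho,J}$. So $f_{\rho,J}$ is weakly conformal, which means $q_{\rho,J}=0$. By \eqref{eq:critical-equivalence}, $J\in\operatorname{Crit}(E_\rho)$, and this contradicts $\operatorname{Crit}(E_\rho)=\varnothing$ from Theorem~\ref{thm:main}.

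Alternatively, one can skip the Hopf differential and uniqueness entirely. Apply Lemma~\ref{lem:trapping} directly to the weakly conformal harmonic $\rho$-equivariant map, with $\im\rho=\im\rho_0$ preserving $Y$. Then repeat the final part of the proof of Theorem~\ref{thm:main}: it produces a degree-one holomorphic map $(S_g,J)\to(S_h,J_0)$, which must be biholomorphic, forcing $g=h$.

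The only delicate point is the bookkeeping in the reduction to $\sigma=\rho$. One must check that the equivalence relation and the conjugation act compatibly, and that the marked conjugacy class is insensitive to basepoint and isotopy choices. Once that is settled, the rest is a direct appeal to the theorem.
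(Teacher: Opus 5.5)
Your main argument is correct and is essentially the paper's proof, with the conjugation bookkeeping written out. You transport the triple to $\rho$ itself, identify $f$ with $f_{\rho,J}$ by uniqueness for the irreducible $\rho$, and read off $q_{\rho,J}=0$, i.e.\ $J\in\operatorname{Crit}(E_\rho)$ by \eqref{eq:critical-equivalence}, contradicting Theorem~\ref{thm:main}.

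Your alternative route via Lemma~\ref{lem:trapping} is also valid and slightly more economical. The part of the proof of Theorem~\ref{thm:main} after the critical-point hypothesis uses only that $f$ is nonconstant, harmonic, weakly conformal and $\rho$-equivariant, so this route needs neither the uniqueness theorem nor the first-variation formula.
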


\begin{proof}
By uniqueness of the equivariant harmonic map and the equivalences
in~\eqref{eq:critical-equivalence}, a triple above a character $[\sigma]$
exists precisely when $E_\sigma$ has a critical point.  Apply
Theorem~\ref{thm:main}.
\end{proof}

\end{document}